\newtheorem{theorem}{Theorem}[section]
\newtheorem{lemma}[theorem]{Lemma}
\newtheorem{proposition}[theorem]{Proposition}
\newtheorem{corollary}[theorem]{Corollary}
\theoremstyle{definition}
\newtheorem{definition}[theorem]{Definition}
\newtheorem{example}[theorem]{Example}
\theoremstyle{remark}
\numberwithin{equation}{section}
\begin{document}

\title[Moore-Penrose inverse of Gram operator]
{Moore-Penrose inverse of Gram operator on Hilbert $C^*$-modules}
\author{M. S. Moslehian}
\address{Mohammad Sal Moslehian: Department of Pure Mathematics,
Center of Excellence in Analysis on Algebraic Structures (CEAAS),
Ferdowsi University of Mashhad, P.O. Box 1159, Mashhad 91775, Iran.}
\email{moslehian@ferdowsi.um.ac.ir and moslehian@member.ams.org}
\urladdr{http://profsite.um.ac.ir/~moslehian/}
\author{K. Sharifi}
\address{Kamran Sharifi: Department of Mathematics,
Shahrood University of Technology, P. O. Box 3619995161-316,
Shahrood, Iran.
\newline School of Mathematics, Institute for Research in
 Fundamental Sciences (IPM), P. O. Box: 19395-5746, Tehran, Iran.}
\email{sharifi.kamran@gmail.com}
\author{M. Forough}
\address{Marzieh Forough: Department of Mathematics,
Ferdowsi University of Mashhad, P. O. Box 1159, Mashhad, Iran.}
\email{mforough86@gmail.com}
\author{M. Chakoshi}
\address {Mahnaz Chakoshi: Tusi Mathematical Research Group,
Mashhad, P.O. Box 1113, Iran.}
 \email{ma\_chakoshi@yahoo.com}

\subjclass[2010]{Primary 46L08; Secondary 47A05, 46L05}
\keywords{Unbounded operator, Moore-Penrose inverse, Hilbert
$C^*$-module, $C^*$-algebra, $C^*$-algebra of compact operators.}

\begin{abstract}
Let $t$ be a regular operator between Hilbert $C^*$-modules and
$t^\dag$ be its Moore-Penrose inverse. We investigate the
Moore-Penrose invertibility of the Gram operator $t^*t$. More
precisely, we study some conditions ensuring that $t^{ \dag} = (t^*
\, t)^{ \dag} \, t^*= t^*\,(t \, t^*)^{ \dag}$ and $(t^*t)^{
\dag}=t^{ \dag}t^{* \, \dag}$ hold. As an application, we get some
results for densely defined closed operators on Hilbert
$C^*$-modules over $C^*$-algebras of compact operators.
\end{abstract}
\maketitle

\section{Introduction.}
Hilbert $C^*$-modules are essentially objects like Hilbert spaces,
except that the inner product, instead of being complex-valued,
takes its values in a $C^*$-algebra. Although Hilbert $C^*$-modules
behave like Hilbert spaces in some ways, some fundamental Hilbert
space properties like Pythagoras' equality, self-duality, and even
decomposition into orthogonal complements do not hold in general. A
(right) {\it pre-Hilbert $C^*$-module} over a $C^*$-algebra
$\mathscr{A}$ is a right $\mathscr{A}$-module $\mathscr{X}$ equipped
with an $\mathscr{A}$-valued inner product $\langle \cdot , \cdot
\rangle : \mathscr{X} \times \mathscr{X} \to \mathscr{A}\,, \ (x,y)
\mapsto \langle x,y \rangle$, which is $\mathscr A$-linear in the
second variable $y$ as well as $ \langle x,y \rangle=\langle y,x
\rangle ^{*}$ and $\langle x,x \rangle \geq 0$ with equality only
when $x=0$. A pre-Hilbert $\mathscr{A}$-module $\mathscr{X}$ is
called a \emph{Hilbert $ \mathscr{A}$-module} if $\mathscr{X}$ is a
Banach space with respect to the norm $\| x \|=\|\langle x,x\rangle
\| ^{1/2}$, where the latter norm denotes the norm of ${\mathscr
A}$. Each $C^*$-algebra ${\mathscr A}$ can be regarded as a Hilbert
${\mathscr A}$-module via $\langle a, b\rangle = a^*b\,\,(a, b \in
{\mathscr A})$. A Hilbert $\mathscr A$-submodule $W$ of a Hilbert
$\mathscr A$-module $\mathscr{X}$ is an orthogonal summand if $W
\oplus W^\bot = \mathscr{X}$, where $W^\bot$ denotes the orthogonal
complement of $W$ in $\mathscr{X}$.

Throughout this paper we assume that $\mathscr{A}$ is an arbitrary
$C^*$-algebra (not necessarily unital) and $\mathscr{X},
\mathscr{Y}$ are Hilbert $\mathscr A$-modules. By an operator we
mean a linear operator. We may deal with bounded and unbounded
operators at the same time, so we will denote bounded operators by
capital letters and unbounded operators by small letters. In
addition, ${\rm Dom}(\cdot)$, ${\rm Ker}(\cdot)$ and ${\rm
Ran}(\cdot)$ stand for domain, kernel and range of operators,
respectively. An operator $t$ between $\mathscr{X}$ and
$\mathscr{Y}$ is a linear operator with ${\rm Dom}(t) \subseteq
\mathscr{X}$ and ${\rm Ran}(t) \subseteq \mathscr{Y}$. It is
$\mathscr{A}$-linear if $t(xa)=t(x)a$ for all $x \in {\rm Dom}(t)$
and all $a \in \mathscr{A}$. The set of all $\mathscr{A}$-linear
operators between $\mathscr{X}$ and $\mathscr{Y}$ is denoted by
$\mathcal{L}(\mathscr{X},\mathscr{Y})$. As usual,
$\mathcal{L}(\mathscr{X})$ stands for
$\mathcal{L}(\mathscr{X},\mathscr{Y})$ if $\mathscr{X}=\mathscr{Y}$.
An operator $t\in \mathcal{L}(\mathscr{X},\mathscr{Y})$, whose ${\rm
Dom}(t)$ is a dense submodule of $\mathscr{X}$ is called a
\emph{densely defined operator}. An operator $t\in
\mathcal{L}(\mathscr{X},\mathscr{Y})$ is called \emph{closed} if its
graph $G(t)=\{(x,t(x)):~x \in {\rm Dom}(t)\}$ is a closed submodule
of the Hilbert $\mathscr{A}$-module $\mathscr{X} \oplus \mathscr{Y}$
equipped with the $C^*$-inner product $\langle (x_1, y_1), (x_2,
y_2)\rangle = \langle x_1, x_2\rangle+\langle y_1, y_2\rangle$. If
$s\in \mathcal{L}(\mathscr{X},\mathscr{Y})$ is an extension of $t\in
\mathcal{L}(\mathscr{X},\mathscr{Y})$, we write $t\subset s$. As
usual, this means that ${\rm Dom}(t) \subseteq {\rm Dom}(s)$ and
$s(x)=t(x)$ for all $x\in {\rm Dom}(t)$. If $t$ has a closed
extension, then it is called \emph{closable} and the operator
$\overline{t}\in \mathcal{L}(\mathscr{X},\mathscr{Y})$ with the
property $G(\overline{t}) = \overline{G(t)}$ is called the
\emph{closure} of $t$. A densely defined operator $t\in
\mathcal{L}(\mathscr{X},\mathscr{Y})$ is called \emph{adjointable}
if there exists a densely defined operator $t^*\in
\mathcal{L}(\mathscr{Y},\mathscr{X})$ with the domain $ {\rm
Dom}(t^*)=\{y \in \mathscr{Y}:~ \mbox{there exist}~ z \in
\mathscr{X} ~ \mbox{such that}~ \langle t(x),y\rangle =\langle
x,z\rangle ~\mbox{for any~} x \in {\rm Dom}(t)\} $ satisfying the
property $\langle t(x),y\rangle=\langle x,t^*(y)\rangle$ for any $x
\in {\rm Dom}(t),~ y\in {\rm Dom}(t^*)$. This property ensures that
${\rm Dom}(t^*)$ is a submodule of $\mathscr{Y}$ and $t^*$ is a
closed $\mathscr{A}$-linear map. In the setting of Hilbert spaces
any densely defined closed operator has a densely defined adjoint
but this phenomenon does not occur in the framework of Hilbert
$C^*$-modules in general. It is notable that any adjointable
operator with domain $\mathscr{X}$ is automatically a bounded
$\mathscr{A}$-linear map. We denote by
$\mathbb{B}(\mathscr{X},\mathscr{Y})$ the set of all adjointable
operators from $\mathscr{X}$ into $\mathscr{Y}$. The set
$\mathbb{B}(\mathscr{X},\mathscr{X})$ is abbreviated by
$\mathbb{B}(\mathscr{X})$.

If $s \in \mathcal{L}(\mathscr{X},\mathscr{Y})$ and $t\in
\mathcal{L}(\mathscr{Y},\mathscr{Z})$ are densely defined operators
between Hilbert $C^*$-modules, we define the composition operator
$ts$ by $(ts)(x) = t(s(x))$ for all $x \in {\rm Dom}(ts)$, where
${\rm Dom}(ts)= \{x \in {\rm Dom}(s): s(x) \in {\rm Dom}(t)\}$. Then
$ts \in \mathcal{L}(\mathscr{X},\mathscr{Z})$, but $ts$ is not
necessarily densely defined. Suppose two densely defined operators
$t$ and $s$ are adjointable, then it is easy to see that $s^*t^*
\subset (ts)^*$. If $T$ is a bounded adjointable operator, then
$s^*T^* = (Ts)^*$. Damaville \cite{DAM} proved that under certain
conditions the product of two regular operators between Hilbert
$C^*$-modules is regular. Regular operators on Hilbert $C^*$-modules
were first introduced by Baaj and Julg \cite{BAA} and extensively
studied in \cite{LAN}.

\begin{definition}
An operator $t\in \mathcal{L}(\mathscr{X},\mathscr{Y})$ is said to
be \emph{regular} if  $t$ is densely defined, closed and adjointable
and the range of $1+t^*t$ is dense in $\mathscr{X}$. We denote the
set of all regular operators in
$\mathcal{L}(\mathscr{X},\mathscr{Y})$ by
$\mathcal{R}(\mathscr{X},\mathscr{Y})$. We abbreviate
$\mathcal{R}(\mathscr{X},\mathscr{X})$ by
$\mathcal{R}(\mathscr{X})$.
\end{definition}
This definition is equivalent to the notion of regularity introduced
by Woronowicz \cite{wor}. If $t$ is regular, then $t^*$ is regular,
$t=t^{\ast\ast}$ and also $t^*t$ is regular and self-adjoint. It may
occur that $t^*$ is regular but not $t$, see \cite[Propositions 2.2
and 2.3]{PAL}. Also a densely defined operator $t$ is regular if and
only if its graph is orthogonally complemented in $\mathscr{X}
\oplus \mathscr{Y}$, cf. \cite[Corollary 3.2]{F-S}. Suppose $t \in
\mathcal{R}(\mathscr{X},\mathscr{Y})$ and define
$Q_{t}=(1+t^*t)^{-1/2}$ and $F_{t}=tQ_{t}$. Then ${\rm
Ran}(Q_{t})={\rm Dom}(t)$,  $0 \leq Q_{t}= (1 - F_{t}^{*}
F_{t})^{1/2} \leq 1$ in $\mathbb{B}(\mathscr{X})$ and $F_{t}\in
\mathbb{B}(\mathscr{X},\mathscr{Y})$.

The following terminology is basic in our study, cf. \cite{FS2}.

\begin{definition}\label{3}
Let $t \in \mathcal{R}(\mathscr{X},\mathscr{Y})$. An operator $s \in
\mathcal{R}(\mathscr{Y},\mathscr{X})$ is called a
\emph{Moore--Penrose inverse} of $t$ if $tst=t,~sts=s,~(ts)^*=
\overline{ts}$ and $(st)^*=\overline{st}$.
\end{definition}

If $t \in \mathcal{L}(\mathscr{H},\mathscr{H}')$ is a densely
defined closed operator between Hilbert spaces, then Py'tev
\cite{PYT} proved that there is a densely defined closed operator
$s$ satisfying the relations in Definition \ref{3}. Xu and Sheng
\cite{xu} proved that an adjointable operator acting on the whole of
a Hilbert $C^*$-module has a Moore--Penrose inverse if and only if
it has closed range. In \cite[Theorem 3.1]{FS2}, a very useful
necessary and sufficient condition for a regular operator $t$ to
admit a unique Moore--Penrose inverse, denoted by $t^\dagger$, is
given as follows:

\begin{theorem} \label{thm_polar_decomp}
If $t\in \mathcal{R}(\mathscr{X},\mathscr{Y})$, then the following
conditions are equivalent:
\newcounter{cou001}
\begin{list}{(\roman{cou001})}{\usecounter{cou001}}

\item $t$ and $t^*$ have unique Moore-Penrose inverses
which are adjoint to each other, $ t ^{ \dag}$ and $t ^{ \dag \,
* }$.

\item $\mathscr{X}={\rm Ker}(t) \oplus \overline{{\rm Ran}(t^*)}$ and $\mathscr{Y}={\rm Ker}(t^*) \oplus
\overline{{\rm Ran}(t)}$.
\end{list}

\noindent In this situation, $ \overline{t^* \, t ^{ \dag \, *}}$
and $\overline{t t ^{ \dag }}$ are the projections onto $
\overline{{\rm Ran}(t^*)}=\overline{{\rm Ran}(t^*t)}$ and $
\overline{{\rm Ran}(t)}$, respectively.
\end{theorem}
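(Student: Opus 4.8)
The plan is to prove the equivalence by establishing a symmetric, Hilbert-space-style argument adapted to the regular-operator setting, using the polar-type decomposition $t = F_t Q_t^{-1}$ (on the appropriate domain) together with the basic facts recalled just before the theorem: $Q_t = (1+t^*t)^{-1/2}$ is a positive contraction with $\mathrm{Ran}(Q_t) = \mathrm{Dom}(t)$, and $F_t = tQ_t \in \mathbb{B}(\mathscr{X},\mathscr{Y})$. The key structural observation is that for any densely defined closed adjointable $t$ one always has the \emph{closure} of the orthogonal splittings at the level of ranges, i.e. $\overline{\mathrm{Ran}(t^*)} = \mathrm{Ker}(t)^\perp$ and $\overline{\mathrm{Ran}(t)} = \mathrm{Ker}(t^*)^\perp$; what may fail in the $C^*$-module world is that $\mathrm{Ker}(t)$ and $\mathrm{Ker}(t)^\perp$ actually \emph{span} $\mathscr{X}$, i.e. that $\mathrm{Ker}(t)$ is an orthogonal summand. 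So condition (ii) is precisely the statement that these kernels are orthogonally complemented, and the task is to show this is what is needed to build the Moore--Penrose inverse.

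For the direction (ii) $\Rightarrow$ (i), I would first note that $\mathrm{Ker}(t)$ being an orthogonal summand of $\mathscr{X}$ makes the projection $P := P_{\overline{\mathrm{Ran}(t^*)}} \in \mathbb{B}(\mathscr{X})$ available, and likewise $Q := P_{\overline{\mathrm{Ran}(t)}} \in \mathbb{B}(\mathscr{Y})$. Then I would define $s$ on the natural domain $\mathrm{Dom}(s) := \mathrm{Ran}(t) \oplus (\mathscr{Y} \ominus \overline{\mathrm{Ran}(t)})$ by declaring $s$ to be zero on $\mathrm{Ker}(t^*) = \mathscr{Y} \ominus \overline{\mathrm{Ran}(t)}$ and, for $y = t(x)$ with $x \in \mathrm{Dom}(t)$, setting $s(y)$ to be the unique element of $\mathrm{Dom}(t) \cap \overline{\mathrm{Ran}(t^*)}$ that $t$ maps to $y$ (well-definedness uses that $t$ restricted to $\mathrm{Dom}(t) \cap \overline{\mathrm{Ran}(t^*)}$ is injective with the same range as $t$, which follows from the decomposition of $\mathrm{Dom}(t)$). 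I would then verify the four Moore--Penrose identities $tst = t$, $sts = s$, $(ts)^* = \overline{ts}$, $(st)^* = \overline{st}$ directly from this construction, check that $s$ is closed and densely defined with $\mathrm{Ran}(1+s^*s)$ dense (so that $s \in \mathcal{R}(\mathscr{Y},\mathscr{X})$), and note that by the symmetric construction $t^*$ also has such an inverse; uniqueness of the Moore--Penrose inverse (already quoted from \cite{FS2}) then forces $s = t^\dag$, and a short computation with the defining identities gives $(t^\dag)^* = (t^*)^\dag$, i.e. the two are adjoint to each other. Along the way, the identifications $\overline{t^* t^{\dag *}} = P$ and $\overline{t t^\dag} = Q$ come out of the construction, and $\overline{\mathrm{Ran}(t^*)} = \overline{\mathrm{Ran}(t^*t)}$ follows from $\mathrm{Ker}(t^*t) = \mathrm{Ker}(t)$ together with self-adjointness of $t^*t$.

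For the converse (i) $\Rightarrow$ (ii), I would argue that if $t^\dag$ exists as a regular operator with $(t^\dag)^* = (t^*)^\dag$, then $\overline{t^\dag t}$ and $\overline{t t^\dag}$ are, by the identities $(t^\dag t)^* = \overline{t^\dag t}$ and $(t^\dag t)^2 \subset t^\dag t$ (from $t t^\dag t = t$ and $t^\dag t t^\dag = t^\dag$), bounded self-adjoint idempotents — hence orthogonal projections in $\mathbb{B}(\mathscr{X})$ and $\mathbb{B}(\mathscr{Y})$ respectively. Identifying their ranges as $\overline{\mathrm{Ran}(t^\dag)} = \overline{\mathrm{Ran}(t^*)}$ (using the adjointness $(t^\dag)^* = (t^*)^\dag$ and $sts = s$) and their kernels as $\mathrm{Ker}(t)$, respectively $\overline{\mathrm{Ran}(t)}$ and $\mathrm{Ker}(t^*)$, yields exactly the orthogonal decompositions in (ii).

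I expect the main obstacle to be the careful bookkeeping of domains in the (ii) $\Rightarrow$ (i) direction: one must make sure that the candidate $s$ genuinely lands in $\mathrm{Dom}(t)$ so that the composites $ts$, $st$ make sense, that $s$ is closed (not merely closable) and that its graph is orthogonally complemented — equivalently that $\mathrm{Ran}(1+s^*s)$ is dense — which is where the orthogonal-summand hypothesis on $\mathrm{Ker}(t^*)$ in $\mathscr{Y}$ is genuinely used, not just that on $\mathrm{Ker}(t)$. The Hilbert-space proof of Py\'tev glosses over exactly these points because there all closed subspaces are complemented; here the content of the theorem is precisely that the two complementation hypotheses are what make the classical construction go through.
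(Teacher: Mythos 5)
This theorem is not proved in the paper at all: it is imported verbatim from \cite[Theorem 3.1]{FS2}, and the only thing the paper adds is the explicit description of $t^{\dag}$ on ${\rm Dom}(t^{\dag})={\rm Ran}(t)\oplus{\rm Ker}(t^*)$ given immediately after the statement. Your construction of $s$ is exactly that description, so your overall strategy is the standard one and the skeleton of the argument (build $s$ from the decompositions, verify the four identities, get (ii) from the projections $\overline{t^{\dag}t}$ and $\overline{tt^{\dag}}$) is sound. Two points, however, need repair. First, your ``key structural observation'' that $\overline{{\rm Ran}(t^*)}={\rm Ker}(t)^{\perp}$ holds for every densely defined closed adjointable operator is false in Hilbert $C^*$-modules: only the inclusion $\overline{{\rm Ran}(t^*)}\subseteq{\rm Ker}(t)^{\perp}$ is automatic, since ${\rm Ker}(t)^{\perp}={\rm Ran}(t^*)^{\perp\perp}$ can be strictly larger than $\overline{{\rm Ran}(t^*)}$ (closed submodules need not be biorthogonal). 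Consequently condition (ii) is \emph{not} merely the statement that the kernels are orthogonal summands; it is the stronger requirement that the complement be exactly $\overline{{\rm Ran}(t^*)}$ (resp.\ $\overline{{\rm Ran}(t)}$). Your actual construction uses (ii) directly, so this does not break the proof, but the framing paragraph should be corrected, and in the direction (i)$\Rightarrow$(ii) you must genuinely identify ${\rm Ran}(\overline{t^{\dag}t})$ with $\overline{{\rm Ran}(t^*)}$ (via $t^*t^{\dag *}\subset(t^{\dag}t)^*$ and $t^*=t^*t^{*\dag}t^*$) rather than with ${\rm Ker}(t)^{\perp}$.

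Second, you cannot invoke ``uniqueness of the Moore--Penrose inverse already quoted from \cite{FS2}'' to conclude $s=t^{\dag}$: uniqueness is part of assertion (i) of the very theorem you are proving, so this is circular as written. You must derive uniqueness directly from the four defining relations (the usual algebraic computation, with closures inserted where the products are only densely defined). Finally, the step you yourself flag as the main obstacle --- that $s$ is not just closed but \emph{regular}, i.e.\ adjointable with $1+s^*s$ of dense range --- is left as an assertion. The cleanest way to discharge it, consistent with the tools the paper cites, is either to observe that $G(s)$ is obtained from $G(t)$ restricted to $\overline{{\rm Ran}(t^*)}\oplus\overline{{\rm Ran}(t)}$ by the flip $(x,y)\mapsto(y,x)$ together with the orthogonal summand ${\rm Ker}(t^*)\oplus 0$, so that orthogonal complementability of $G(t)$ transfers to $G(s)$ and \cite[Corollary 3.2]{F-S} applies, or to pass through the bounded transform $F_t$. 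Until that is written out, the proof of (ii)$\Rightarrow$(i) is a plausible sketch rather than a complete argument.
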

Recall that ${\rm Dom}(t^{ \dag}):={\rm Ran}(t) \oplus {\rm
Ker}(t^*)$ and $t^{ \dag}: {\rm Dom}(t^{ \dag})\subseteq \mathscr{Y}
\to \mathscr{X}$ is defined by $t^{ \dag}
\,(t(x_{1}+x_{2})+x_{3})=x_{1}$, for all $x_{1} \in Dom(t) \cap
\overline{{\rm Ran}(t^*)}$, $x_{2} \in {\rm Dom}(t) \cap {\rm
Ker}(t)$ and $x_{3} \in {\rm Ker}(t^*)$. The adjoint of $t^{ \dag}$
is defined similarly.

In view of \cite[Corollary 3.4 ]{FS2}, every regular operator with
closed range has a bounded adjointable Moore-Penrose inverse.

Let $T$ be a bounded linear operator with closed range between
Hilbert spaces. The \emph{Gram operator} of $T$ is defined to be the
operator $T^*T$. One of interesting problems in matrix
theory/operator theory is investigation of the Moore--Penrose
inverse $(T^*T)^\dagger$. The equalities $(T^*T)^\dagger=T^\dagger
T^*{^\dagger}$ and $T^\dagger=T^*(TT^*)^\dagger=(T^*T)^\dagger T^*$
were presented in \cite{DDM, Groetsch1995} in the case when $T$ is a
bounded linear operator acting on a Hilbert space. In this paper, we
generalize them to the case where $t$ is a certain operator in the
framework of Hilbert $C^*$-modules. We set conditions which ensure
that $t^*\,(t \, t^*)^{ \dag} = t^{ \dag}$
 and $ t^{ \dag} = (t^* \, t)^{ \dag} \, t^*$ and
 $(t^*t)^{ \dag}=t^{ \dag}t^{* \, \dag}$ hold.
We present an example showing that the equalities do not hold in
general. Finally, we apply our results to densely defined closed
operators on Hilbert $C^*$-modules over $C^*$-algebras of compact
operators.

\section{Moore-Penrose invertibility of Gram operator.}

In this section we obtain unbounded versions of some results of
\cite{DDM} in the framework of Hilbert $C^*$-modules. Indeed, we
study some conditions which ensure that $t^{ \dag} = (t^* \, t)^{
\dag} \, t^*= t^*\,(t \, t^*)^{ \dag}$ and $(t^*t)^{ \dag}=t^{
\dag}t^{* \, \dag}$ hold. Our results are also reformulated in terms
of bounded adjointable operators.

\begin{lemma} \label{Forough0}
If $t \in \mathcal{R}(\mathscr{X},\mathscr{Y})$ has closed range,
then so does $t \, t^*$.
\end{lemma}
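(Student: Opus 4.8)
The plan is to reduce the closed-range property of $t\,t^*$ to that of $t$ via the bounded adjointable operator $F_t = tQ_t$ with $Q_t = (1+t^*t)^{-1/2}$, whose basic properties are recalled in the excerpt. First I would observe that $t$ and $F_t$ have the same range: since ${\rm Ran}(Q_t) = {\rm Dom}(t)$ and $Q_t$ is injective with dense range, $F_t(\mathscr X) = t(Q_t(\mathscr X)) = t({\rm Dom}(t)) = {\rm Ran}(t)$. Hence ${\rm Ran}(F_t)$ is closed whenever ${\rm Ran}(t)$ is, so by the Xu--Sheng theorem $F_t$ has a bounded adjointable Moore--Penrose inverse, and consequently so does $F_t F_t^*$; in particular ${\rm Ran}(F_t F_t^*)$ is closed. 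A standard fact for a bounded adjointable operator $S$ with closed range is that ${\rm Ran}(SS^*) = {\rm Ran}(S)$ (the inclusion $\subseteq$ is obvious, and the reverse follows because $SS^*$ restricted to $\overline{{\rm Ran}(S)} = {\rm Ran}(S)$ has the same range as $S$, using $\mathscr Y = {\rm Ker}(S^*)\oplus {\rm Ran}(S)$). Thus ${\rm Ran}(F_t F_t^*) = {\rm Ran}(F_t) = {\rm Ran}(t)$, which is closed.

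Next I would relate $F_t F_t^*$ to $t\,t^*$. Since $F_t = tQ_t$ and $Q_t$ is bounded adjointable and self-adjoint, we have $F_t^* = Q_t t^*$, hence $F_t F_t^* = t Q_t^2 t^* = t(1+t^*t)^{-1} t^*$ on the appropriate domain. The operator $t\,t^*$ is regular (as $t^*$ is regular and the Gram operator of a regular operator is regular), and the functional calculus identity $t Q_t^2 t^* = F_{t^*} F_{t^*}^{*} = (t t^*)(1 + t t^*)^{-1}$ should hold; more directly, $F_{t^*} = t^* Q_{t^*}$ gives $F_{t^*}^* F_{t^*} = Q_{t^*} t t^* Q_{t^*}$, and one computes that its range equals ${\rm Ran}(t^* )$'s analogue. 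The cleanest route is: apply the already-established argument with $t$ replaced by $t^*$, noting $t^*$ is regular with closed range (its range is closed iff ${\rm Ran}(t)$ is, since ${\rm Ran}(t^*) = {\rm Ran}(t^*t)$ and $t^*t$ relates to the projection $\overline{t^* t^{\dagger *}}$), to get that $F_{t^*} F_{t^*}^*$ has closed range equal to ${\rm Ran}(t^*t)$; then identify $t\,t^*$ up to a bounded invertible-on-range factor. Concretely, $\overline{{\rm Ran}(t t^*)} = \overline{{\rm Ran}(t)}$ and on ${\rm Ran}(t)$ the operator $t\,t^*$ agrees in range with $F_t F_t^*$ after composing with the bounded positive operator $(1 - F_{t^*}^* F_{t^*})$ which is injective on the relevant subspace.

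The main obstacle I anticipate is the bookkeeping of domains: $t\,t^*$ is an unbounded operator, and the identification ${\rm Ran}(t\,t^*) = {\rm Ran}(F_t F_t^*)$ requires care because $F_t F_t^*$ is everywhere defined while $t\,t^*$ is only densely defined, so one must check that no range is lost in passing between ${\rm Dom}(t^*)$ and the full module. The key technical point is that ${\rm Dom}(t^*) \supseteq {\rm Ran}(Q_{t^*})$ is dense and that $t^*$ maps ${\rm Dom}(t^* t^*{}^{*})$-type subspaces onto the same space $t^*$ itself does; combined with the decomposition $\mathscr Y = {\rm Ker}(t^*) \oplus \overline{{\rm Ran}(t)}$ (available since $t$ has closed range, hence the equivalent conditions of Theorem \ref{thm_polar_decomp} hold), this lets one conclude ${\rm Ran}(t\,t^*) = t(\overline{{\rm Ran}(t)} \cap {\rm Dom}(t^*) \cap \cdots)$ is exactly ${\rm Ran}(t)$, which is closed. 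Once the range identification is secured, closedness is immediate, and the lemma follows.
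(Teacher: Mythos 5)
There is a genuine gap. Your reduction to the bounded transform is sound up to the point where you must relate ${\rm Ran}(t\,t^*)$ to ${\rm Ran}(F_tF_t^*)$, and that is exactly the step you never carry out: the final formula contains a literal ``$\cdots$'' and the phrase ``one must check that no range is lost,'' which is the whole content of the lemma in this approach. Moreover, the identity you invoke, $tQ_t^2t^*=F_{t^*}F_{t^*}^{*}$, is wrong as written: $F_{t^*}F_{t^*}^{*}$ is an operator on $\mathscr X$, while $tQ_t^2t^*$ is densely defined on $\mathscr Y$ (and $F_tF_t^*$ is everywhere defined, whereas $tQ_t^2t^*$ has domain ${\rm Dom}(t^*)$, so even the equality $F_tF_t^*=tQ_t^2t^*$ is only an inclusion). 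The identity that would actually close your argument is $F_tF_t^* = F_{t^*}^{*}F_{t^*} = t\,t^*(1+t\,t^*)^{-1}$ (using $F_t^*=F_{t^*}$), from which ${\rm Ran}(t\,t^*)={\rm Ran}(F_tF_t^*)$ follows because $(1+t\,t^*)^{-1}=Q_{t^*}^2$ maps $\mathscr Y$ bijectively onto ${\rm Dom}(t\,t^*)$. With that inserted, your chain ${\rm Ran}(t\,t^*)={\rm Ran}(F_tF_t^*)={\rm Ran}(F_t)={\rm Ran}(t)$ works, but as submitted the proof is not complete. (A smaller slip: $t\bigl(\overline{{\rm Ran}(t)}\cap{\rm Dom}(t^*)\cap\cdots\bigr)$ does not typecheck, since $t$ acts on $\mathscr X$ while $\overline{{\rm Ran}(t)}\subseteq\mathscr Y$.)

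For comparison, the paper's proof avoids the bounded transform entirely and is three lines: closedness of ${\rm Ran}(t)$ gives closedness of ${\rm Ran}(t^*)$ and the decomposition $\mathscr X={\rm Ker}(t)\oplus{\rm Ran}(t^*)$; for $x\in{\rm Dom}(t)$ write $x=z+t^*y$ with $z\in{\rm Ker}(t)\subseteq{\rm Dom}(t)$, so that $t^*y=x-z\in{\rm Dom}(t)$, i.e.\ $y\in{\rm Dom}(t\,t^*)$, and $t(x)=t\,t^*(y)$; hence ${\rm Ran}(t\,t^*)={\rm Ran}(t)$ is closed. This is essentially the same range identity you were aiming for, but obtained directly for the unbounded operator, with the domain bookkeeping handled automatically by the observation that $x-z$ lies in ${\rm Dom}(t)$.
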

\begin{proof} If $ {\rm Ran}(t)$ is closed, then $ {\rm Ran}(t^*)$ is closed and $\mathscr{X}=
{\rm {\rm Ker}}(t) \oplus {\rm Ran}(t^*)$. Let $x \in {\rm Dom}(t)$.
Then $x=z + t^*y $, for some $y \in {\rm Dom}(t^*)$ and $z \in {\rm
Ker}(t) \subseteq {\rm Dom}(t)$. We therefore have $t (x)= t\, t^*
(y)$, that is, $t$ and $t \, t^*$ have the same range.
\end{proof}

\begin{theorem} \label{Forough1}
Suppose $t \in \mathcal{R}(\mathscr{X},\mathscr{Y})$ and
$\overline{{\rm Ran}(t^*)}$ and $ \overline{{\rm Ran}(t)}$ are
orthogonally complemented in $\mathscr{X}$ and $\mathscr{Y}$,
respectively. Then the following statements hold:
\begin{list}{(\roman{cou001})}{\usecounter{cou001}}
\item $t^*\,(t \, t^*)^{ \dag} \subseteq t^{ \dag}$,
\item $ t^{ \dag} \subseteq (t^* \, t)^{ \dag} \, t^*$ if and only
if ${\rm Ran}(t) \subseteq {\rm Dom}(t^*)$.
\end{list}
 If in addition $t$ has closed range, then
\begin{list}{(\roman{cou001})}{\usecounter{cou001}}
   \setcounter{cou001}{2}
\item $t^*\,(t \, t^*)^{ \dag} = t^{ \dag}$,
\item $ t^{ \dag} = (t^* \, t)^{ \dag} \, t^*$ when
${\rm Ran}(t) \subseteq {\rm Dom}(t^*)$.
\end{list}
\end{theorem}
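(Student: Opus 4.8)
The plan is to reduce everything to statements about the bounded adjointable operator $F_t = tQ_t$, where $Q_t = (1+t^*t)^{-1/2}$, since $t$ and $F_t$ share kernels and ranges and the Moore–Penrose theory of regular operators is most tractable in the bounded picture (Theorem \ref{thm_polar_decomp} and \cite[Corollary 3.4]{FS2}). First I would verify the hypotheses of Theorem \ref{thm_polar_decomp}: the assumption that $\overline{{\rm Ran}(t^*)}$ and $\overline{{\rm Ran}(t)}$ are orthogonally complemented, together with the standard identities $\mathscr{X} = \overline{{\rm Ran}(t^*)} \oplus {\rm Ker}(t)$ holding whenever the range is complemented, gives the orthogonal decompositions in condition (ii) of that theorem; hence $t$, $t^*$, $tt^*$ and $t^*t$ all possess (unique) Moore–Penrose inverses. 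This is the setup in which the symbols $t^\dag$, $(tt^*)^\dag$, $(t^*t)^\dag$ in the statement are meaningful.

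For (i), the strategy is to show that $s := t^*(tt^*)^\dag$ satisfies, on its natural domain, the defining relations of the Moore–Penrose inverse of $t$ restricted appropriately, so that $s \subseteq t^\dag$ by uniqueness. Concretely I would compute $ts = tt^*(tt^*)^\dag = \overline{(tt^*)(tt^*)^\dag}$ on the relevant domain, which by Theorem \ref{thm_polar_decomp} applied to the self-adjoint operator $tt^*$ is the projection onto $\overline{{\rm Ran}(tt^*)} = \overline{{\rm Ran}(t)}$; similarly $st = t^*(tt^*)^\dag t$ should turn out to be (the closure of) a projection onto $\overline{{\rm Ran}(t^*)}$. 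Checking $tst = t$ and $sts = s$ then reduces to the bounded-operator identities for $F_t$ and $F_{tt^*}$, which follow from the Hilbert-space-style computations of \cite{DDM} transcribed via functional calculus of $tt^*$. The inclusion (rather than equality) appears because ${\rm Dom}(s) = {\rm Ran}(tt^*) \oplus {\rm Ker}((tt^*)^*)$ can be strictly smaller than ${\rm Dom}(t^\dag) = {\rm Ran}(t) \oplus {\rm Ker}(t^*)$ when the range is not closed.

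For (ii), the forward implication uses that if $t^\dag \subseteq (t^*t)^\dag t^*$ then every $y \in {\rm Dom}(t^\dag) \supseteq {\rm Ran}(t)$ must lie in ${\rm Dom}((t^*t)^\dag t^*) \subseteq {\rm Dom}(t^*)$, giving ${\rm Ran}(t) \subseteq {\rm Dom}(t^*)$ immediately; the converse runs as in (i) — under the domain condition ${\rm Ran}(t) \subseteq {\rm Dom}(t^*)$ one checks that $t^*t \cdot (t^*t)^\dag t^* = \overline{(t^*t)(t^*t)^\dag}\, t^*$ is the projection onto $\overline{{\rm Ran}(t^*)}$ followed by $t^*$, and the four Moore–Penrose relations for $(t^*t)^\dag t^*$ as a partial inverse of $t$ hold on ${\rm Dom}(t^\dag)$, so uniqueness forces the inclusion. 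Finally, statements (iii) and (iv) are the closed-range specializations: when ${\rm Ran}(t)$ is closed, Lemma \ref{Forough0} gives that $tt^*$ has closed range, so $(tt^*)^\dag$ is bounded adjointable with full domain $\mathscr{Y}$, whence ${\rm Dom}(t^*(tt^*)^\dag) = \mathscr{Y} = {\rm Dom}(t^\dag)$ and the inclusion in (i) becomes equality; likewise ${\rm Ran}(t^*)$ is closed so $(t^*t)^\dag$ is everywhere-defined and bounded, upgrading (ii) to (iv). The main obstacle I anticipate is the bookkeeping of domains — ensuring at each step that the composite operators are applied only where both factors are defined, and that the closures $\overline{ts}$, $\overline{st}$ in Definition \ref{3} match the bounded projections coming out of the $F_t$-calculus; the algebraic identities themselves are the easy part, essentially lifted verbatim from \cite{DDM}.
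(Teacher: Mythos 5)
Your setup (existence of the relevant Moore--Penrose inverses from the orthogonal complementation hypotheses, the forward direction of (ii) via domain comparison, and the closed-range upgrade of (i),(ii) to (iii),(iv) using Lemma \ref{Forough0} and the fact that ${\rm Dom}(t^\dag)=\mathscr{Y}$) matches the paper. But the central step of (i) and of the converse in (ii) rests on an appeal that does not work as stated: you propose to verify the four Moore--Penrose relations for $s:=t^*(tt^*)^\dag$ ``on its natural domain'' and then conclude $s\subseteq t^\dag$ ``by uniqueness.'' The uniqueness in Theorem \ref{thm_polar_decomp} is a statement about a regular operator satisfying $tst=t$, $sts=s$, $(ts)^*=\overline{ts}$, $(st)^*=\overline{st}$ globally; it says nothing about an operator with strictly smaller domain that satisfies restricted versions of these identities being a \emph{restriction} of $t^\dag$. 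To get the inclusion you must actually show, for each $x\in{\rm Dom}(s)$, that $x\in{\rm Dom}(t^\dag)$ and $s(x)=t^\dag(x)$. The paper does exactly this by direct evaluation: it writes $x=tt^*(x_1+x_2)+x_3$ with $x_1\in{\rm Dom}(tt^*)\cap\overline{{\rm Ran}(tt^*)}$, $x_2\in{\rm Dom}(tt^*)\cap{\rm Ker}(tt^*)$, $x_3\in{\rm Ker}(tt^*)={\rm Ker}(t^*)$, uses the explicit formula $t^\dag(t(x_1'+x_2')+x_3')=x_1'$ recalled after Theorem \ref{thm_polar_decomp} together with $\overline{{\rm Ran}(tt^*)}=\overline{{\rm Ran}(t)}$ and ${\rm Ker}(tt^*)={\rm Ker}(t^*)={\rm Ker}(t^\dag)$, and computes $t^\dag(x)=t^\dag t\,(t^*x_1)=t^*(x_1)=t^*(tt^*)^\dag(x)$; the converse of (ii) is the analogous computation with $(t^*t)^\dag(t^*t)x_1=x_1$. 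Your route, made rigorous, would in any case have to carry out essentially this same pointwise computation (e.g.\ to identify $\overline{ts}$ with the projection onto $\overline{{\rm Ran}(t)}$), so the ``uniqueness'' detour adds no leverage.

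A secondary remark: the promised reduction to the bounded transform $F_t=tQ_t$ never actually enters your argument, and it is not needed --- the paper works directly with $t$, $t^*$, $tt^*$, $t^*t$ and their explicit Moore--Penrose formulas. If you repair the main step by the direct evaluation above, the rest of your outline (domains, closed-range case) goes through as you describe.
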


\begin{proof} The Moore-Penrose inverse of the regular operator $t \, t^*$ exists by
the orthogonal decompositions into direct summands and the fact that
$ \overline{{\rm Ran}( t)}= \overline{{\rm Ran}( t \,t^*)}$. To
prove (i) we have $ {\rm Dom} ( t^* (t \, t^*)^{ \dag} ) = {\rm Dom}
( (t \, t^*)^{ \dag} )= {\rm Ran}(t \, t^*)+{\rm Ker}(t \, t^*)
\subseteq { \rm Ran}(t)+{ \rm Ker}(t^*)= { \rm Dom}( t^{ \dag} ).$
Let $x=t \, t^*(x_1 +x_2)+x_3 \in {\rm Dom} ( (t \, t^*)^{ \dag} )$
with $x_1 \in {\rm Dom} ( t \, t^* ) \cap \overline{ {\rm Ran}(  t
\, t^*) }$, $x_2 \in {\rm Dom} ( t \, t^* ) \cap  {\rm Ker}(  t \,
t^*)$ and $x_3 \in {\rm Ker}(  t \, t^*)= {\rm Ker}( t^*)= {\rm
Ker}( t^{ \dag})$. Then $(t \, t^*)^{\dag} ( x) =x_1$. We therefore
have $$t^{ \dag}(x)= t^{ \dag}(t \, t^*(x_1 +x_2)+x_3)= (t^{ \dag} t
)\, t^* (x_1)+0+0= t^*(x_1)= t^*\,(t \, t^*)^{ \dag}(x).$$ That is,
$t^*\,(t \, t^*)^{ \dag}= t^{ \dag}$ on  $ {\rm Dom} ( t^* (t \,
t^*)^{ \dag} )$.

Let the operator inclusion of (ii) hold. Then ${\rm Dom}(t^{ \dag})
\subseteq {\rm Dom}(t^{*})$, which implies that ${\rm Ran}(t)
\subseteq {\rm Dom}(t^*)$. Conversely, if ${\rm Ran}(t) \subseteq
{\rm Dom}(t^*)$ and $x= t (x_1 +x_2)+x_3 \in {\rm Dom}( t^{ \dag})$
 with  $x_1 \in {\rm Dom} ( t) \cap \overline{ {\rm Ran}( t^*) }$, $x_2 \in
{\rm Dom} ( t ) \cap  {\rm Ker}(  t )$ and $x_3 \in {\rm Ker}(
t^*)$, then $t^{ \dag} (x)=x_1$. Since $ \overline{{\rm Ran}( t^*)}=
\overline{{\rm Ran}( t^* \,t)}$, we get
$$ ((t^* \, t)^{ \dag}t^*)(t (x_1 +x_2)+x_3)=
(t^* \, t)^{ \dag} \, (t^* \,t) \,x_1+0+0= \overline{(t^* \, t)^{
\dag} \, (t^* \,t)} \,x_1 =  x_1=t^{ \dag}(x) .$$ That is,
$(t^*\,t)^{ \dag} \, t^*= t^{ \dag}$ on  $ {\rm Dom} ( t^{ \dag} )$.

To demonstrate (iii) we suppose that $t$ has closed range. Then $t
\, t^*$ has closed range and ${\rm Ran}(t \, t^*)= {\rm Ran}(t)$. In
this case, $t^{ \dag}$ is everywhere defined. Hence, $$ {\rm Dom} (
t^* (t \, t^*)^{ \dag} ) = {\rm Ran}(t \, t^*)+{\rm Ker}(t \, t^*) =
{ \rm Ran}(t)+{ \rm Ker}(t^*)= { \rm Dom}( t^{ \dag}
)=\mathscr{Y}.$$ The result now follows from (i). Finally, if $t^{
\dag}$ is everywhere defined and ${\rm Ran}(t) \subseteq {\rm
Dom}(t^*)$, then the inclusion of (ii) changes to an equality which
completes the proof.
\end{proof}

\begin{corollary} \label{Forough001}
Suppose $t \in \mathcal{R}(\mathscr{X},\mathscr{Y})$ has closed
range and ${\rm Ran}(t) \subseteq {\rm Dom}(t^*)$. Then
\begin{list}{(\roman{cou001})}{\usecounter{cou001}}
\item $t^{ \dag} = (t^* \, t)^{ \dag} \, t^*= t^*\,(t \, t^*)^{
\dag}$,
\item $(t^*t)^{ \dag}=t^{ \dag}t^{* \, \dag}$. 
\end{list}
\end{corollary}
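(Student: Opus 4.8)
The plan is to read (i) off directly from Theorem~\ref{Forough1}, and then to obtain (ii) by a short computation after observing that the hypotheses of Corollary~\ref{Forough001} are rigid enough to force $t$ (hence $t^{*}$) to be bounded and adjointable. \emph{For (i):} since $t$ has closed range we have ${\rm Ran}(t)=\overline{{\rm Ran}(t)}$ and, as recalled in the proof of Lemma~\ref{Forough0}, ${\rm Ran}(t^{*})=\overline{{\rm Ran}(t^{*})}$ with $\mathscr{X}={\rm Ker}(t)\oplus{\rm Ran}(t^{*})$; applying the same to the regular operator $t^{*}$ gives $\mathscr{Y}={\rm Ker}(t^{*})\oplus{\rm Ran}(t)$. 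Thus $\overline{{\rm Ran}(t)}$ and $\overline{{\rm Ran}(t^{*})}$ are orthogonally complemented, so Theorem~\ref{Forough1} applies, and its parts (iii) and (iv) — the latter using the hypothesis ${\rm Ran}(t)\subseteq{\rm Dom}(t^{*})$ — are precisely the two equalities claimed in (i).

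\emph{For (ii):} I would first note that from $\mathscr{Y}={\rm Ker}(t^{*})\oplus{\rm Ran}(t)$ together with ${\rm Ker}(t^{*})\subseteq{\rm Dom}(t^{*})$ and ${\rm Ran}(t)\subseteq{\rm Dom}(t^{*})$ one gets ${\rm Dom}(t^{*})=\mathscr{Y}$, so $t^{*}$, and hence $t=t^{**}$, is bounded adjointable; consequently every composition below is everywhere defined. By Lemma~\ref{Forough0} applied to $t$ and to $t^{*}$, both $t^{*}t$ and $t\,t^{*}$ have closed range, so by \cite[Corollary~3.4]{FS2} the operators $t^{\dag}$, $t^{*\,\dag}$ and $(t^{*}t)^{\dag}$ are bounded adjointable; moreover $t^{*\,\dag}=(t^{\dag})^{*}$ by Theorem~\ref{thm_polar_decomp}, and $(t^{*}t)^{\dag}$ is self-adjoint by the same theorem applied to the self-adjoint operator $t^{*}t$. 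Now take adjoints in the equality $t^{\dag}=(t^{*}t)^{\dag}t^{*}$ furnished by (i): using $s^{*}T^{*}=(Ts)^{*}$ for the bounded adjointable operator $T=(t^{*}t)^{\dag}$, together with $t^{**}=t$ and the self-adjointness of $(t^{*}t)^{\dag}$, this gives $t^{*\,\dag}=t\,(t^{*}t)^{\dag}$, whence $t^{\dag}t^{*\,\dag}=t^{\dag}t\,(t^{*}t)^{\dag}$. Finally, since ${\rm Ran}\big((t^{*}t)^{\dag}\big)\subseteq\overline{{\rm Ran}(t^{*}t)}=\overline{{\rm Ran}(t^{*})}$ (the inclusion by the description of the Moore--Penrose inverse recalled after Theorem~\ref{thm_polar_decomp}, the equality also from that theorem), and since $t^{\dag}t$ restricts to the identity on $\overline{{\rm Ran}(t^{*})}$ — indeed $t^{\dag}t$ is, by Theorem~\ref{thm_polar_decomp}, the orthogonal projection onto $\overline{{\rm Ran}(t^{*})}$ — we obtain $t^{\dag}t\,(t^{*}t)^{\dag}=(t^{*}t)^{\dag}$, that is, $(t^{*}t)^{\dag}=t^{\dag}t^{*\,\dag}$.

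I do not expect a genuine obstacle here: once boundedness has been secured, (ii) is essentially the Hilbert-space identity of \cite{DDM} transplanted to adjointable operators with closed range. The only steps calling for a little care are the correct evaluation of the adjoint of the product $(t^{*}t)^{\dag}t^{*}$ and the fact that $t^{\dag}t$ fixes $\overline{{\rm Ran}(t^{*})}$ pointwise, both of which are handled by the boundedness just established and by Theorem~\ref{thm_polar_decomp}. As an alternative to the adjoint manipulation, one could instead check directly that $S:=t^{\dag}t^{*\,\dag}$ satisfies the four Moore--Penrose relations for $t^{*}t$ — namely $(t^{*}t)S(t^{*}t)=t^{*}t$, $S(t^{*}t)S=S$, and $(t^{*}t)S$, $S(t^{*}t)$ self-adjoint — and then appeal to uniqueness of the Moore--Penrose inverse of an adjointable operator with closed range \cite{xu}.
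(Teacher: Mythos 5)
Your proof is correct. Part (i) is the same as the paper's: closedness of the range yields the orthogonal decompositions, so parts (iii) and (iv) of Theorem~\ref{Forough1} apply directly (the paper just cites \cite[Proposition 1.2]{F-S} and \cite[Theorem 3.2]{LAN} for the complementability rather than spelling it out). For part (ii) you take a mildly different route. The paper gets the key identity $t^{*\,\dag}=t\,(t^*t)^{\dag}$ by applying Theorem~\ref{Forough1}(iii) to the operator $t^*$ (which has closed range and satisfies ${\rm Ran}(t^*)\subseteq{\rm Dom}(t^{**})={\rm Dom}(t)$ automatically), and then finishes in one line via the Moore--Penrose relation $(t^*t)^{\dag}\,(t^*t)\,(t^*t)^{\dag}=(t^*t)^{\dag}$. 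You instead first observe that the hypotheses force ${\rm Dom}(t^*)={\rm Ker}(t^*)\oplus{\rm Ran}(t)=\mathscr{Y}$, hence $t^*$ and $t=t^{**}$ are bounded adjointable; you then obtain $t^{*\,\dag}=t\,(t^*t)^{\dag}$ by taking adjoints in $t^{\dag}=(t^*t)^{\dag}t^*$, and conclude using the fact that $t^{\dag}t$ is the projection onto $\overline{{\rm Ran}(t^*)}\supseteq{\rm Ran}\bigl((t^*t)^{\dag}\bigr)$. Both arguments are valid; the paper's is shorter and avoids the boundedness discussion, but your preliminary observation is a worthwhile addition the paper does not make: under the stated hypotheses $t$ is necessarily a bounded adjointable operator with closed range, so Corollary~\ref{Forough001} is in substance the bounded case recorded as Corollary~\ref{Forough0001}, and the ``genuinely unbounded'' content of the section lives in Theorem~\ref{Forough1} itself.
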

\begin{proof} According
to \cite[Proposition 1.2]{F-S} and \cite[Theorem 3.2]{LAN}, ${\rm
Ran}(t)$ and ${\rm Ran}(t^*)$ are orthogonally complemented in
$\mathscr{Y}$ and $\mathscr{X}$, respectively. These facts together
with Theorem \ref{Forough1} imply the equalities of the first part.

The closedness of the range of $t^*$ and part (iii) of Theorem
\ref{Forough1} ensure that $t^{* \, \dag} = t \, (t^* \, t)^{
\dag}$. We therefore have
$$ t^{ \dag} t^{* \, \dag}= (t^*t)^{ \dag} t^* \, t (t^*t)^{ \dag}=
(t^*t)^{ \dag}.$$
\end{proof}

\begin{corollary} \label{Forough0001}
Suppose $T \in \mathbb{B}(\mathscr{X},\mathscr{Y})$ has closed
range. Then
\begin{list}{(\roman{cou001})}{\usecounter{cou001}}
\item $T^{ \dag} = (T^* \, T)^{ \dag} \, T^*= T^*\,(T \, T^*)^{
\dag}$,
\item $(T^*T)^{ \dag}=T^{\, \dag}T^{* \, \dag}$.
\end{list}
\end{corollary}

The preceding result follows from the fact that
$\mathbb{B}(\mathscr{X},\mathscr{Y})$ is a subset of
$\mathcal{R}(\mathscr{X},\mathscr{Y})$. The set of all regular
operators from a topological point of view were studied in
\cite{SHA/GAP, SHA/APROACH, SHA/POLAR}.

The reader should be aware that the operator inclusions given in
Theorem \ref{Forough1} may be strict even for bounded operators.

\begin{example} Let $V$ be the Voltera operator on
$L^2[0,1]$, i.e., $(Vf)(x)= \int_0^x \, f(y) \, \mathrm{d}y$. Then
the adjoint of $V$ is given by $(V^*f)(x)= \int_x^1 \, f(y) \,
\mathrm{d}y$. The operators $V$ and $V^*$ are bounded and
injective, that is, ${ \rm Ker}(V)={ \rm Ker}(V^*)=\{ 0 \}$ and
${\rm Ran}(V)$ and ${\rm Ran}(V^*)$ are dense in $L^2[0,1]$. Indeed,
$$L^2[0,1]={ \rm Ker}(V^*) \oplus \overline{ {\rm Ran}(V)}= \overline{{\rm Ran}(V)},$$
$$L^2[0,1]={ \rm Ker}(V) \oplus \overline{ {\rm Ran}(V^*)}= \overline{{\rm Ran}(V^*)}.$$
Moreover, we have $(V \, V^*f)(x)= \int_0^x \, (\int_y^1 f(t) \,
\mathrm{d}t) \,   \mathrm{d}y = x \int_x^1 f(y)\, \mathrm{d}y  +
\int_0^x \, yf(y) \, \mathrm{d}y.$ We claim that $ {\rm Ran}(V \,
V^*) \neq {\rm Ran}(V)$. To see this, we consider the identity
function $f(x)=x$ in ${\rm Ran}(V)$. If $f=V\, V^*g$ for some $g \in
L^2[0,1]$, then
$$f'(x)=\frac{ \mathrm{d}}{ \mathrm{d}x} \ (\int_0^x \, (\int_y^1 g(t) \,
\mathrm{d}t) \, \mathrm{d}y)= \int_x^1 g(t) \, \mathrm{d}t,~~~
\mathrm{for}~ \mathrm{ all}~ x \in[0,1],$$ which implies that
$f'(1)=0$, a contradiction. This means that the
Volterra integral equation $x = x \int_x^1 g(y)\, \mathrm{d}y  +
\int_0^x \, yg(y) \, \mathrm{d}y$ has no solution in
$L^2[0,1]$.

Since ${\rm Dom}(V^*(V \,V^*)^{ \dag})= {\rm Dom}((V \,V^*)^{
\dag})= {\rm Ran}(V \, V^*)+{\rm Ker}(V \,V^*)={\rm Ran}(V \,
V^*)+{\rm Ker}(V^*)= {\rm Ran}(V \, V^*)$, we have ${\rm Dom}(V^*(V
\,V^*)^{ \dag} \subseteq {\rm Ran}(V)= {\rm Ran}(V)+{\rm Ker}(V^*) =
{\rm Dom}(V^{ \dag})$. The latter inclusion is strict since $ {\rm
Ran}(V \, V^*) \subset {\rm Ran}(V)$. Hence, ${\rm Dom}(V^*(V
\,V^*)^{ \dag}) \subset {\rm Dom}(V^{ \dag})$. This means that
the operator inclusions given in Theorem \ref{Forough1} may be
strict even for bounded operators on Hilbert spaces. Another example
can be found in the book of Ben-Israel and Greville, cf.
\cite[Chapter 9, Ex. 26]{Ben-Israel}.
\end{example}

The above example also shows that the assumption ``closedness of the
range of $t$'' in  part (ii) of Corollary \ref{Forough001} cannot be
removed.

\begin{theorem} \label{Forough4}
Suppose $t \in \mathcal{R}(\mathscr{X})$ and $\overline{{\rm
Ran}(t^*)}$ and $ \overline{{\rm Ran}(t)}$ are orthogonally
complemented in $\mathscr{X}$. If $S$ is a bounded adjointable
operator which commutes with $t$ and $t^*$, then $S t^{ \dag}
\subseteq t^{ \dag}S$.
\end{theorem}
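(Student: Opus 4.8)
The plan is to reduce the claimed inclusion $St^{\dag}\subseteq t^{\dag}S$ to checking two things: first that every $y\in\mathrm{Dom}(St^{\dag})$ lies in $\mathrm{Dom}(t^{\dag}S)$, and second that the two operators agree on that common domain. By hypothesis $t$ and $t^*$ have unique Moore--Penrose inverses adjoint to each other (this is guaranteed by Theorem~\ref{thm_polar_decomp}, since the orthogonal complementability assumptions give exactly condition (ii) there), so $t^{\dag}$ is a well-defined regular operator with $\mathrm{Dom}(t^{\dag})=\mathrm{Ran}(t)\oplus\mathrm{Ker}(t^*)$. Since $S$ is bounded adjointable and everywhere defined, $\mathrm{Dom}(St^{\dag})=\mathrm{Dom}(t^{\dag})$, so the real content is the domain-inclusion $S\bigl(\mathrm{Dom}(t^{\dag})\bigr)\subseteq\mathrm{Dom}(t^{\dag})$ together with the algebraic identity.

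First I would record the consequences of $S$ commuting with $t$ and $t^*$. From $St\subseteq tS$ (as operator inclusion, the correct reading of ``commutes'' in the unbounded setting) one gets $S\bigl(\mathrm{Ran}(t)\bigr)\subseteq\mathrm{Ran}(t)$ and $S\bigl(\mathrm{Ker}(t)\bigr)\subseteq\mathrm{Ker}(t)$; likewise from $St^*\subseteq t^*S$ one gets $S$ invariance of $\mathrm{Ran}(t^*)$ and of $\mathrm{Ker}(t^*)$, and hence (by continuity of $S$) of the closures $\overline{\mathrm{Ran}(t^*)}=\overline{\mathrm{Ran}(t^*t)}$ and $\overline{\mathrm{Ran}(t)}$. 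Consequently $S$ leaves $\mathrm{Dom}(t^{\dag})=\mathrm{Ran}(t)\oplus\mathrm{Ker}(t^*)$ invariant, which settles the domain inclusion. Next I would also note that $S$ commutes with the projections $\overline{tt^{\dag}}$ (onto $\overline{\mathrm{Ran}(t)}$) and $\overline{t^{*}t^{\dag *}}$ (onto $\overline{\mathrm{Ran}(t^*)}$) furnished by Theorem~\ref{thm_polar_decomp}; this is because a bounded adjointable operator that leaves an orthogonally complemented submodule and its complement invariant commutes with the corresponding projection.

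For the identity, take $y=t(x_1+x_2)+x_3$ with $x_1\in\mathrm{Dom}(t)\cap\overline{\mathrm{Ran}(t^*)}$, $x_2\in\mathrm{Dom}(t)\cap\mathrm{Ker}(t)$, $x_3\in\mathrm{Ker}(t^*)$, so that $t^{\dag}y=x_1$ and $St^{\dag}y=Sx_1$. On the other hand $Sy=S\,t(x_1+x_2)+Sx_3=t\,S(x_1+x_2)+Sx_3=t\bigl(Sx_1+Sx_2\bigr)+Sx_3$, and by the invariance facts above $Sx_1\in\mathrm{Dom}(t)\cap\overline{\mathrm{Ran}(t^*)}$, $Sx_2\in\mathrm{Dom}(t)\cap\mathrm{Ker}(t)$, and $Sx_3\in\mathrm{Ker}(t^*)$; hence the defining formula for $t^{\dag}$ gives $t^{\dag}Sy=Sx_1=St^{\dag}y$. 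I expect the main obstacle to be the bookkeeping that justifies $S\bigl(\mathrm{Dom}(t)\bigr)\subseteq\mathrm{Dom}(t)$ and the compatibility of $S$ with the \emph{decomposition} $\mathrm{Dom}(t)=(\mathrm{Dom}(t)\cap\overline{\mathrm{Ran}(t^*)})\oplus(\mathrm{Dom}(t)\cap\mathrm{Ker}(t))$ rather than just with the ambient direct sum --- one must use that $Q_t=(1+t^*t)^{-1/2}$ has range $\mathrm{Dom}(t)$ and commutes with $S$ (which follows from $S$ commuting with $t^*t$, hence with its bounded functions), so that $S$ maps $\mathrm{Dom}(t)$ into itself and respects the splitting induced by the projection onto $\overline{\mathrm{Ran}(t^*)}$.
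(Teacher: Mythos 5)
Your proof is correct, but it takes a genuinely different route from the paper's. The paper's proof is a one-liner built on the Groetsch-type representation $t^{\dag}=\lim_{\omega\to 0^+}t^*(\omega 1+tt^*)^{-1}$ on ${\rm Dom}(t^{\dag})$ from \cite{SHA/Groetsch}: since $S$ commutes with $\omega 1+tt^*$ and hence with its bounded inverse, and since $S$ is bounded (so it passes through the strong limit), the identity $St^{\dag}=t^{\dag}S$ on ${\rm Dom}(t^{\dag})$ falls out immediately. You instead verify the inclusion directly from the defining formula $t^{\dag}\bigl(t(x_1+x_2)+x_3\bigr)=x_1$, after checking that $S$ leaves ${\rm Ran}(t)$, ${\rm Ker}(t)$, ${\rm Ker}(t^*)$ and $\overline{{\rm Ran}(t^*)}$ invariant --- all of which do follow from the operator inclusions $St\subseteq tS$ and $St^*\subseteq t^*S$. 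Your approach is more elementary and self-contained (it needs no external representation theorem, only the explicit description of $t^{\dag}$ recalled after Theorem~\ref{thm_polar_decomp}), at the cost of some bookkeeping; the paper's is shorter but leans on a nontrivial cited result. One remark: the worry in your final paragraph about compatibility of $S$ with the splitting of ${\rm Dom}(t)$ is already resolved by your own computation. An arbitrary element of ${\rm Dom}(t^{\dag})$ is by definition presented as $t(x_1+x_2)+x_3$ with the stated membership of $x_1,x_2,x_3$; once you have checked $S\bigl({\rm Dom}(t)\bigr)\subseteq{\rm Dom}(t)$ (immediate from $St\subseteq tS$) and that $Sx_1$, $Sx_2$, $Sx_3$ land in the corresponding pieces, the well-definedness of $t^{\dag}$ yields $t^{\dag}(Sy)=Sx_1$ with no need to re-decompose $S(x_1+x_2)$, so the detour through $Q_t=(1+t^*t)^{-1/2}$ is unnecessary.
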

\begin{proof} Suppose $S$ commutes with $t$ and $t^*$ and $ \omega > 0$.
Then $S$ commutes with $( \omega 1+ tt^*)$, which implies
commutativity of $S$ and of the bounded operator $( \omega 1+
tt^*)^{-1}$. In view of commutativity of $S$ with $t^*$ and $(
\omega 1+ tt^*)^{-1}$, boundedness of $S$ and Theorem 2.8 of
\cite{SHA/Groetsch}, we infer that
$$S \, t^{ \dag}=S \, \lim_{ \omega \to 0^+} t^* ( \omega 1+
tt^*)^{-1}= \lim_{ \omega \to 0^+} t^* ( \omega 1+ tt^*)^{-1} \, S=
t^{ \dag} \, S ~~{\rm~on~ Dom}( t^{ \dag}).$$
\end{proof}

\begin{proposition} \label{Forough6} Suppose $t \in \mathcal{R}(\mathscr{X})$
and $\overline{{\rm Ran}(t^*)}$ and $ \overline{{\rm Ran}(t)}$ are
orthogonally complemented in $\mathscr{X}$. Then $t$ is selfadjoint
if and only if $t= \overline{ t^{ \dag}t} \, t^*$.
\end{proposition}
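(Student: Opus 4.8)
The plan is to identify the bounded operator $\overline{t^{\dag}t}$ with the orthogonal projection $P$ of $\mathscr{X}$ onto $\overline{{\rm Ran}(t^*)}$, and then to observe that $P$ acts as the identity on ${\rm Ran}(t^*)$, so that left multiplication by it does not change $t^*$; the asserted equivalence then collapses to the tautology ``$t=t^*$''.

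First I would invoke Theorem \ref{thm_polar_decomp}: the orthogonal complementation hypotheses guarantee that $t$ admits a Moore--Penrose inverse $t^{\dag}$, that $t^*$ admits the Moore--Penrose inverse $t^{\dag\,*}=(t^{\dag})^*$, and that $\overline{t^*\,t^{\dag\,*}}$ is the (self-adjoint) projection of $\mathscr{X}$ onto $\overline{{\rm Ran}(t^*)}$; call this projection $P$. Next I would show $\overline{t^{\dag}t}=P$. Since $t$ and $t^{\dag}$ are adjointable (being regular), the general inclusion $a^*b^*\subseteq(ba)^*$ for adjointable operators, applied with $b=t^{\dag}$ and $a=t$, gives $t^*\,t^{\dag\,*}\subseteq(t^{\dag}t)^*$; on the other hand, Definition \ref{3} applied to the pair $t,t^{\dag}$ yields $(t^{\dag}t)^*=\overline{t^{\dag}t}$. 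Passing to closures we obtain $\overline{t^*\,t^{\dag\,*}}\subseteq\overline{t^{\dag}t}$, and since the left-hand side is bounded and everywhere defined, this inclusion is forced to be an equality, so $\overline{t^{\dag}t}=P$.

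With this in hand the remaining computation is essentially formal. Because $P$ is bounded with ${\rm Dom}(P)=\mathscr{X}$, the operator $\overline{t^{\dag}t}\,t^*=P\,t^*$ has domain exactly ${\rm Dom}(t^*)$; and for every $y\in{\rm Dom}(t^*)$ one has $t^*(y)\in{\rm Ran}(t^*)\subseteq\overline{{\rm Ran}(t^*)}={\rm Ran}(P)$, whence $P\,t^*(y)=t^*(y)$. Thus $\overline{t^{\dag}t}\,t^*=t^*$ as operators, and therefore the identity $t=\overline{t^{\dag}t}\,t^*$ holds precisely when $t=t^*$, i.e.\ precisely when $t$ is selfadjoint, which is the assertion; note that this single equality disposes of both implications at once. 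I expect the only delicate point to be the identification $\overline{t^{\dag}t}=P$, namely the bookkeeping of adjoints and closures of products of unbounded operators; once that is settled, the equivalence is immediate.
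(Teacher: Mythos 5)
Your proof is correct and follows essentially the same route as the paper: both arguments reduce the statement to the fact that $\overline{t^{\dag}t}$ is the orthogonal projection onto $\overline{{\rm Ran}(t^*)}$, so that $\overline{t^{\dag}t}\,t^*=t^*$ and the claimed identity becomes $t=t^*$. The only (cosmetic) difference is how that projection identity is obtained: you derive it from $t^*t^{\dag\,*}\subseteq(t^{\dag}t)^*=\overline{t^{\dag}t}$ together with the last assertion of Theorem \ref{thm_polar_decomp}, whereas the paper computes $\overline{{\rm Ran}(t^{\dag})}={\rm Ker}(t)^{\perp}=\overline{{\rm Ran}(t^*)}$ directly.
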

\begin{proof} The assertion follows from the fact that
$ \overline{ t^{ \dag}t}$ is the orthogonal projection onto $
\overline{ {\rm Ran}(t^{ \dag})}= {\rm Ker}(t^{ \dag \, *})^{
\perp}={\rm Ker}(t)^{ \perp}= \overline{ {\rm Ran}(t^*)}$, which
implies $t^*= \overline{ t^{ \dag}t} \, t^*$.
\end{proof}

We end our paper with the following useful observations.
By an arbitrary $C^*$-algebra of compact
operators $\mathscr{A}$ we mean that
$\mathscr{A} = c_{0}$-$ \oplus_{i \in I}\mathbb{K}
(\mathscr{H}_{i})$, i.e., $\mathscr{A}$ is a
$c_{0}$-direct sum of elementary $C^*$-algebras $\mathbb{K}(\mathscr{H}_{i})$ of all
compact operators acting on Hilbert spaces $\mathscr{H}_{i}, \ i \in
I$. If $\mathscr{A}$ is an arbitrary $C^*$-algebra of compact
operators, then for every pair of Hilbert $ \mathscr{A}$-modules
$\mathscr{X}, \mathscr{Y}$, every densely defined closed operator
$t: Dom(t)\subseteq \mathscr{X} \to \mathscr{Y}$ is automatically
regular and has a Moore-Penrose inverse, cf. \cite{F-S, FS2, FR1, GUL}.
The following results follow from \cite[Theorem 3.8]{FS2}.

\begin{corollary} \label{Forough11}
Suppose $\mathscr{X}$ and $\mathscr{Y}$ are Hilbert $C^*$-modules
over an arbitrary $C^*$-algebra of compact operators and $t\in
\mathcal{L}(\mathscr{X},\mathscr{Y})$ is a densely defined closed
operator. Then the conclusions of Theorems \ref{Forough1}, \ref{Forough4}
and Proposition \ref{Forough6} hold.
\end{corollary}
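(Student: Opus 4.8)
The plan is to reduce Corollary \ref{Forough11} to the three results already established by checking that their standing hypotheses are satisfied automatically in this setting. First I would recall the structural input: when $\mathscr{A}$ is a $C^*$-algebra of compact operators, every densely defined closed operator $t\colon \mathrm{Dom}(t)\subseteq \mathscr{X}\to\mathscr{Y}$ is automatically regular (so $t\in\mathcal{R}(\mathscr{X},\mathscr{Y})$, $t=t^{**}$, and $t^*t$ is regular and self-adjoint) and admits a Moore--Penrose inverse; this is exactly the content recalled just before the statement, via \cite{F-S, FS2, FR1, GUL}. Hence $t$ fits the framework of Section 2 with no extra assumption.

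Next I would invoke \cite[Theorem 3.8]{FS2}, which in this setting gives that condition (ii) of Theorem \ref{thm_polar_decomp} holds automatically, i.e. $\mathscr{X}=\mathrm{Ker}(t)\oplus\overline{\mathrm{Ran}(t^*)}$ and $\mathscr{Y}=\mathrm{Ker}(t^*)\oplus\overline{\mathrm{Ran}(t)}$. In particular $\overline{\mathrm{Ran}(t^*)}$ and $\overline{\mathrm{Ran}(t)}$ are orthogonally complemented in $\mathscr{X}$ and $\mathscr{Y}$ respectively, which is precisely the hypothesis appearing in Theorem \ref{Forough1}, Theorem \ref{Forough4} and Proposition \ref{Forough6}. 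With these two facts in hand the corollary is immediate: Theorem \ref{Forough1} applies verbatim (yielding $t^*(t\,t^*)^\dag\subseteq t^\dag$, the equivalence of $t^\dag\subseteq (t^*t)^\dag t^*$ with $\mathrm{Ran}(t)\subseteq\mathrm{Dom}(t^*)$, and the corresponding equalities when $\mathrm{Ran}(t)$ is closed), and in the case $\mathscr{X}=\mathscr{Y}$ the conclusions of Theorem \ref{Forough4} and Proposition \ref{Forough6} follow in the same way.

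The only point requiring a little care --- and the closest thing to an obstacle --- is matching hypotheses precisely: Theorem \ref{Forough4} and Proposition \ref{Forough6} are stated for $t\in\mathcal{R}(\mathscr{X})$, so for those two conclusions one reads the corollary with $\mathscr{X}=\mathscr{Y}$ in force, and one should confirm that \cite[Theorem 3.8]{FS2} supplies orthogonal (not merely topological) complementation of exactly the submodules $\overline{\mathrm{Ran}(t)}$ and $\overline{\mathrm{Ran}(t^*)}$ used in Section 2. Neither is a genuine difficulty, so Corollary \ref{Forough11} is essentially a repackaging of the general Hilbert-module theorems of this section together with the automatic regularity and automatic orthogonal complementation available over $C^*$-algebras of compact operators.
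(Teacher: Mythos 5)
Your proposal is correct and matches the paper's own (very brief) justification: the corollary is obtained exactly by invoking the automatic regularity of densely defined closed operators over $C^*$-algebras of compact operators together with \cite[Theorem 3.8]{FS2}, which supplies the orthogonal complementation of $\overline{{\rm Ran}(t)}$ and $\overline{{\rm Ran}(t^*)}$ needed as hypotheses in Theorems \ref{Forough1}, \ref{Forough4} and Proposition \ref{Forough6}. Your added remark about reading $\mathscr{X}=\mathscr{Y}$ for the last two results is a sensible clarification the paper leaves implicit.
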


{\bf Acknowledgement.} The first author was supported by a grant
from Ferdowsi University of Mashhad (No. MP91267MOS). The work of
the second author was in part supported by a grant from IPM (No.
90470018). The work of the third author was done during her stay at
the International School for Advanced Studies (SISSA), Trieste,
Italy, in 2012. She would like to express her thanks for their warm
hospitality. The authors would like to sincerely thank the anonymous
referee for useful suggestions improving the paper.

{ \bf Addendum.} The first and the fourth authors and the second and
the third authors have investigated the same problem separately.
Based on a suggestion made by the referee of both papers, all authors
decided to combine their manuscripts in the current joint work.

\end{document}